\newtheorem{theorem}{Theorem}[section]
\newtheorem{corollary}[theorem]{Corollary}
\newtheorem{proposition}[theorem]{Proposition}
\renewcommand{\a}{\alpha}
\newcommand{\ka}{\kappa}
\newcommand{\lmin}{\lambda_{\rm min}}
\newcommand{\lmax}{\lambda_{\rm max}}
\newcommand{\RR}{\mathbb{R}}
\newcommand{\CC}{\mathbb{C}}
\newcommand{\baromega}{\overline{\omega}}
\newcommand\sumvtou[1]{\sum_{v\to u} #1}
\title{A new kind of Hermitian matrices for digraphs}
\author{Bojan Mohar%
  \thanks{The author was supported in part by the NSERC Discovery Grant R611450 (Canada), by the Canada Research Chairs program, and by the Research Project J1-8130 of ARRS (Slovenia).}~%
  \thanks{On leave from IMFM, Department of Mathematics, University of Ljubljana.}\\
  Department of Mathematics\\
  Simon Fraser University\\
  \texttt{mohar@sfu.ca}
}
\date{\today}
\begin{document}

\maketitle

\begin{abstract}
In an earlier work, the author together with Guo \cite{GuMo} introduced the Hermitian adjacency matrix of directed (and partially directed) graphs. However, it appears that a more natural Hermitian matrix exists, and it is the purpose of this note to bring this new Hermitian matrix to the attention of researchers in algebraic graph theory.
\end{abstract}

\section{Introduction}
\label{sec:intro}

Eigenvalues of graphs have diverse applications in combinatorics. We refer to \cite{Bi93, CDS95, GR, BH, Sp12} and references therein.
On the other hand, results about eigenvalues of digraphs are sparse. One reason is that it is not clear which matrix associated to a digraph $D$ would best reflect interesting combinatorial properties in its spectrum. One candidate is the \emph{adjacency matrix} $A=A(D)$ whose $(u,v)$-entry is 1 if there is an arc from the vertex $u$ to $v$, and 0 otherwise. A well-known theorem of Wilf \cite{Wilf} bounding the chromatic number in terms of its largest eigenvalue extends to this setting as shown in \cite{M10}. However, the disadvantage of this matrix is that it is not symmetric and we lose the property that eigenvalues are real. Moreover, algebraic and geometric multiplicities of eigenvalues may be different. Another candidate is the \emph{skew-symmetric adjacency matrix} $S(D)$, where the $(u,v)$-entry is 1 if there is an arc from $u$ to $v$, and $-1$ if there is an arc from $v$ to $u$ (and $0$ otherwise). This choice is quite natural but it only works for \emph{oriented graphs} (i.e.\ when we have no digons). We refer to a survey by Cavers et al.\ \cite{CaCietal12}.

A Hermitian adjacency matrix for digraphs and mixed graphs was introduced recently by Guo and Mohar \cite{GuMo}, who provided many basic properties of this matrix. The same matrix was independently used by Liu and Li \cite{LiuLi15}, who considered an application in mathematical chemistry. The $uv$-entry of this Hermitian matrix is equal to 1 if there is an unoriented edge between $u$ and $v$, it is equal to $i=\sqrt{-1}$ if $uv$ is an arc and is equal to $-i$ if $vu$ is an arc. Of course, one may ask whether taking complex numbers $i$ and $-i$ is the most natural choice. In this note we offer another choice and argue why that one may be more natural to be used when investigating relationship between eigenvalues and combinatorial properties of digraphs and, more generally, of mixed multigraphs.

With this new Hermitian matrix, each arc directed from $v$ to $u$ contributes the sixth root of unity $\omega = \bigl(1+i\sqrt{3}\,\bigr)/2$ to the $vu$-entry in the matrix and contributes $\baromega = \bigl(1-i\sqrt{3}\,\bigr)/2$ to the $uv$-entry. Note that two oppositely directed arcs between $u$ and $v$ together contribute 1 to each of the entries. In this way, digons have the same effect on the adjacency matrix as undirected edges.

The main reason why the sixth root of unity is natural in relation to combinatorial properties is that $\omega \cdot \overline{\omega} = 1$ and $\omega + \overline{\omega} = 1$. The product of ``weights'' of edges is natural with the counting of closed walks (which is related to the entries of the powers of the adjacency matrix). The sum is needed when dealing with multiple arcs or with weighted edges.

The sixth root of unity appears naturally across applications. It appears in the definition Eisenstein integers; in relation to abstract linear independence (matroids), the sixth root matroids play a special role next to regular and binary matroids \cite{OVW98,Wh97}. They also seem to have connections to Quantum Field Theory \cite{KaKn10}.

In this paper we introduce this new Hermitian matrix and prove a few basic results about its eigenvalues.

\section{Basic properties}
\label{sec:defn}

\subsection{Digraphs and their Hermitian matrices}

A \emph{directed graph} (or \emph{digraph}) $D$ consists of a finite set $V=V(D)$ of vertices together with a set $E=E(D)$ of \emph{arcs} or \emph{directed edges}. Each arc $e\in E$ joins two vertices $u,v\in V$ and one of these, say $v$, is its \emph{initial vertex}, while the other one, $u$, is its \emph{terminal vertex}. To denote this incidence we write $e=vu$ for short. If $vu \in E$ and $uv \in E$, we say that the pair $\{uv, vu\}$ of these oppositely directed arcs is a \emph{digon} of $D$.

Multiple arcs and multiple edges between the same pair of vertices are allowed. Although we sometimes write $uv\in E(D)$, we mean by this any arc from $u$ to $v$. Although we will not discuss loops, they may be present, but they should be considered as undirected (or as ``two oppositely oriented" loops at the same vertex).

Let $\a = a+bi$ be a complex number with absolute value 1, $|\a|=1$, where $a\ge0$, and let $\bar\a = a-bi$ be its conjugate. For a digraph $D$ with vertex set $V = V(D)$ and arc set $E = E(D)$, we consider the \emph{Hermitian adjacency matrix} $N^\a = N^\a(D) \in \CC^{V\times V}$, whose entries $N_{uv}^\a$ are given by
\[
N_{uv}^\a = e(u,v)\,\a + e(v,u)\, \bar\a,
\]
where $e(x,y)$ denotes the number of arcs from $x$ to $y$. Observe that $N_{uv}^\a$ and $N_{vu}^\a$ are conjugate to each other, and therefore $N^\a$ is a Hermitian matrix. If $D$ has loops, each loop contributes $\a+\bar\a = 2a$ to the corresponding diagonal entry.

\subsection{About the most natural choice of $\a$}

Let
$$
   \omega = \frac{1+i\sqrt{3}}{2}
$$
be the primitive sixth root of unity and let $\baromega = \tfrac{1-i\sqrt{3}}{2}$ be its conjugate.

For a digraph $D$, we consider the corresponding Hermitian adjacency matrix $M(D) = N^\omega(D)$ and we refer to it as the \emph{Hermitian matrix of the second kind}.

If every edge of $D$ lies in a digon, then $M(D) = A(D)$, because $\omega + \baromega=1$. This reflects that $D$ is, essentially, equivalent to an undirected graph in such a case. More generally, a mixed graph is a graph where directed and undirected edges may coexist. Formally, a \emph{mixed graph} is an ordered triple $(V, E, A)$ where $V$ is the vertex-set, $E$ is a set of undirected edges, and $A$ is set of arcs, or directed edges. The Hermitian adjacency matrix of the second kind is defined in such a way that all undirected edges may be replaced by digons and, from this perspective, mixed graphs are equivalent to the class of digraphs that we consider here.

The number $\omega$ used in defining the matrix $M(D)$ satisfies the following identities:
$$
    \omega \cdot \baromega = 1 \quad \textrm{and} \quad \omega + \baromega = 1.
$$
While the first condition holds for any $\a$ of absolute value 1, the second identity gives an important \emph{additivity} property and enables us to view oppositely oriented arcs as an unoriented edge and works naturally with multiple arcs and multiple edges. The first one is multiplicative and plays the role in expressing powers of the matrix $M^k$ which correspond, in the same way as the powers of the usual adjacency matrices, to counting ``walks'' of length $k$, the second one allows this property to extend when multiple edges are present, and also works in the setting where the edges have arbitrary positive weights.

Besides the additivity property being implied by our choice of the sixth root of unity, there are several other reasons why this choice may be the most natural among all possible choices for the entries of Hermitian adjacency matrices. The sixth root of unity appears naturally across applications. One such instance is that the sixth-root matroids play a special role in formal theory of linear independence, see, for example, \cite{OVW98,Wh97,Re09,CDN15}. Another natural setup comes from theoretical physics, see \cite{KaKn10} for an example.

\subsection{Eigenvalues}

Observe that $N^\a(D)$ is a Hermitian matrix and so is diagonalizable with real eigenvalues. The following proposition contains properties that are true for adjacency matrices which also carry over to the Hermitian case.

\begin{proposition}\label{prop:real}
Every Hermitian adjacency matrix $N^\a = N^\a(D)$ of a digraph $D$ with vertex-set $V$ has the following properties:
\begin{enumerate}[\rm (i)]
\item All eigenvalues of $N^\a$ are real.
\item The matrix $N^\a$ has $|V|$ pairwise orthogonal eigenvectors in $\CC^V$ and is unitarily similar to a diagonal matrix.
\item The numerical range of $N^\a$, defined as the set $R = \{z^*N^\a z \mid z\in \CC^V, \Vert z \Vert = 1\}$ is an interval of real numbers and $\min R = \lmin(N^\a)$ is the smallest eigenvalue of $N^\a$ and $\max R = \lmax(N^\a)$ is the largest eigenvalue of $N^\a$.
\end{enumerate}
\end{proposition}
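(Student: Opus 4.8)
The plan is to reduce all three parts to the spectral theorem for Hermitian matrices together with elementary properties of the numerical range, after first recording that $N^\a$ is Hermitian. The latter is immediate from the definition: $N^\a_{uv} = e(u,v)\,\a + e(v,u)\,\bar\a$ and $N^\a_{vu} = e(v,u)\,\a + e(u,v)\,\bar\a$ are complex conjugates of each other, so $(N^\a)^* = N^\a$, as already observed above.

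For (i) and (ii) I would invoke Schur's triangularization theorem: there is a unitary $U \in \CC^{V\times V}$ and an upper-triangular matrix $T$ with $N^\a = U T U^*$. Conjugate-transposing this identity and using $(N^\a)^* = N^\a$ gives $T^* = T$; but a Hermitian upper-triangular matrix is diagonal with real diagonal entries. Hence $N^\a$ is unitarily similar to a real diagonal matrix, its eigenvalues (the diagonal entries of $T$) are real, and the columns of $U$ form an orthonormal basis of $\CC^V$ consisting of eigenvectors of $N^\a$. A self-contained alternative avoids Schur: if $N^\a x = \lambda x$ with $x\neq\vzero$, then $\lambda\Vert x\Vert^2 = x^*N^\a x = \overline{x^*N^\a x} = \bar\lambda\Vert x\Vert^2$, forcing $\lambda\in\RR$; eigenvectors belonging to distinct eigenvalues are orthogonal, and one applies Gram--Schmidt inside each eigenspace.

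For (iii) I would first note that $x^*N^\a x$ is real for every $x$, since it equals its own conjugate $x^*(N^\a)^*x$; thus $R\subseteq\RR$. Next, fix an orthonormal eigenbasis $u_1,\dots,u_n$ of $\CC^V$ with real eigenvalues $\lambda_1\le\cdots\le\lambda_n$, so $\lambda_1=\lmin(N^\a)$ and $\lambda_n=\lmax(N^\a)$. Writing a unit vector as $z=\sum_j c_j u_j$ with $\sum_j|c_j|^2=1$, one gets $z^*N^\a z = \sum_j \lambda_j|c_j|^2$, a convex combination of $\lambda_1,\dots,\lambda_n$; hence $R\subseteq[\lmin,\lmax]$, with the two endpoints attained at $z=u_1$ and $z=u_n$. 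Conversely, for $t\in[0,1]$ the vector $z=\sqrt{t}\,u_1+\sqrt{1-t}\,u_n$ is a unit vector with $z^*N^\a z = t\lambda_1+(1-t)\lambda_n$, so every value between $\lmin$ and $\lmax$ is realized and $R=[\lmin,\lmax]$. (Alternatively, $R$ is connected as the continuous image of the connected unit sphere, so once the endpoints are shown to lie in $R\subseteq[\lmin,\lmax]$ it must be the whole interval.)

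I do not expect a genuine obstacle here: all three parts are classical linear algebra once Hermiticity is in hand. The only point that deserves explicit mention is that, although $N^\a$ has complex off-diagonal entries, the form $z\mapsto z^*N^\a z$ takes only real values — this is exactly where Hermiticity (rather than mere normality) is used, and it is what forces the numerical range into $\RR$ and makes the interval statement in (iii) meaningful.
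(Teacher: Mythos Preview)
Your argument is correct and complete. The paper itself does not supply a proof of this proposition: it simply observes beforehand that $N^\a(D)$ is Hermitian and then states (i)--(iii) as standard facts that ``carry over to the Hermitian case,'' so your write-up via the spectral theorem (or Schur triangularization) and the explicit computation of the numerical range in an eigenbasis is more detailed than what the paper provides.
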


The eigenvalues of $N^\a(D)$ are the \emph{$N^\a$-eigenvalues of $D$} and the spectrum of $N^\a(D)$ (i.e.\ the multiset of eigenvalues, counting their multiplicities) is the \emph{$N^\a$-spectrum of $D$}. Similar terminology is used for the particular case of the Hermitian matrix $M(D)$  of the second kind. The $N^\a$-eigenvalues of a digraph $D$ will be ordered in the decreasing order, the $j$th largest eigenvalue will be denoted by $\nu_j^{(\a)}(D)$, so that $\nu_1^{(\a)}(D)\ge \nu_2^{(\a)}(D)\ge \cdots \ge \nu_n^{(\a)}(D)$ ($n=|V|$). For the special case of the matrix $M(D)$, the eigenvalues are $\mu_1(D)\ge \mu_2(D)\ge \cdots \ge \mu_n(D)$.

A direct consequence of Proposition \ref{prop:real} is the min-max formula for $\nu_j^{(\a)}(D)$:
\begin{equation}
  \nu_j^{(\a)}(D) ~= \max_{\dim U=j} ~ \min_{\substack{z\in U\\ \Vert z \Vert = 1}} ~ z^*N^\a z ~= \min_{\dim U=n-j+1} ~ \max_{\substack{z\in U\\ \Vert z \Vert = 1}} ~ z^*N^\a z
\label{eq:minmax jth largest}
\end{equation}
where the outer maximum (minimum) is taken over all subspaces $U$ of $\CC^V$ of dimension $j$ ($n-j+1$, respectively) and the inner minimum (maximum) is taken over all unit vectors $z\in U$.

For $z\in \CC^V$, the following expression of the quadratic form $z^*N^\a z$ shows how individual arcs contribute to it. In this expression we use the summation over all arcs $vu$, which we indicate as $\sumvtou{f(v,u)}$. The sum runs over all arcs $e=vu$, where each arc is considered as many times as its multiplicity. Let $z=x+iy$, where $x,y\in \RR^V$. With $\a = a+bi$ we have:
\begin{eqnarray}
   z^*N^\a z & = & \sum_{v\in V} \overline{z_v} \sum_{u\in V} N_{vu}^\a z_u \nonumber \\
         & = & \sum_{v\in V} (x_v-iy_v) \sum_{u\in V} N_{vu}^\a (x_u+iy_u) \nonumber \\
         & = & \sumvtou{((x_v-iy_v)(x_u+iy_u)\,\a+(x_u-iy_u)(x_v+iy_v)\,\bar\a)} \nonumber \\
         & = & \sumvtou{(2a\,x_vx_u + 2a\,y_vy_u - 2b\,x_vy_u + 2b\,y_vx_u)}.
\label{eq:zNz}
\end{eqnarray}

This expression has a clear combinatorial meaning when combined together with the min-max formula (\ref{eq:minmax jth largest}).

Another property from linear algebra that is used in many combinatorial applications of graph eigenvalues is interlacing. Since $N^\a(D)$ is Hermitian, it has the interlacing property which we describe next.

Suppose that $\nu_1\ge \nu_2\ge \cdots \ge \nu_n$ and $\ka_1\ge \ka_2\ge \cdots \ge \ka_{n-t}$ (where $t\ge1$ is an integer) are sequences of real numbers. We say that the sequences $\nu_l$ ($1\le l\le n$) and $\ka_j$ ($1\le j\le n-t$) \emph{interlace} if for every $s=1,\dots,n-t$, we have
$$
    \nu_s\ge \ka_s\ge \nu_{s+t}.
$$

The usual version of the eigenvalue interlacing property states that the eigenvalues of any principal submatrix of a Hermitian matrix interlace those of the whole matrix (see \cite[Theorems 4.3.8 and 4.3.15]{HoJo13}).
This implies that the eigenvalues of any induced subdigraph interlace those of the digraph itself.

\begin{corollary}
\label{cor:interlacing digraph}
The $N^\a$-eigenvalues of an induced subdigraph interlace the $N^\a$-eigenvalues of the digraph.
\end{corollary}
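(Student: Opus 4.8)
The plan is to reduce the statement directly to the classical eigenvalue interlacing theorem for principal submatrices of Hermitian matrices, which is quoted immediately before the corollary. The only point that genuinely requires checking is that, for an induced subdigraph, the associated Hermitian matrix of the second kind is literally a principal submatrix of the Hermitian matrix of the ambient digraph; once this is established, the conclusion is immediate.

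Concretely, let $D$ be a digraph with vertex set $V$, $|V|=n$, and let $D'$ be the subdigraph induced on a subset $W\subseteq V$ with $|W|=n-t$, where $t\ge 1$ (if $t=0$ there is nothing to prove). First I would observe that for $u,v\in W$ the entry $N^\a_{uv}(D')=e_{D'}(u,v)\,\a+e_{D'}(v,u)\,\bar\a$ coincides with $e_{D}(u,v)\,\a+e_{D}(v,u)\,\bar\a=N^\a_{uv}(D)$: since $D'$ is \emph{induced}, every arc of $D$ with both endpoints in $W$ is an arc of $D'$ and conversely, so $e_{D'}(u,v)=e_D(u,v)$ for all $u,v\in W$; the same remark applies to loops, each contributing $2a$ to the corresponding diagonal entry in either graph. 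Hence $N^\a(D')$ is exactly the principal submatrix of $N^\a(D)$ obtained by keeping the rows and columns indexed by $W$, equivalently by deleting the $t$ rows and the corresponding $t$ columns indexed by $V\setminus W$.

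Now I would invoke the Cauchy interlacing theorem in the form cited from \cite[Theorems 4.3.8 and 4.3.15]{HoJo13}: if $B$ is a principal submatrix of a Hermitian matrix $A\in\CC^{n\times n}$ obtained by deleting $t$ rows and the corresponding $t$ columns, then the eigenvalues of $B$ interlace those of $A$ in the sense defined just above the corollary. Applying this with $A=N^\a(D)$ and $B=N^\a(D')$ yields $\nu_s^{(\a)}(D)\ge \nu_s^{(\a)}(D')\ge \nu_{s+t}^{(\a)}(D)$ for every $s=1,\dots,n-t$, which is exactly the assertion (with $t=|V(D)|-|V(D')|$). I do not expect any real obstacle here; the only things to be slightly careful about are the index bookkeeping (in particular that $t$ is the difference of the two vertex counts, so that the gap in the interlacing inequalities is correct) and the loop convention. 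If one preferred to avoid citing \cite{HoJo13}, an alternative is to derive the inequalities directly from the min-max formula \eqref{eq:minmax jth largest}, restricting the test subspaces $U$ to lie in $\CC^{W}\subseteq\CC^{V}$ for the lower bound and using orthogonal complements for the upper bound.
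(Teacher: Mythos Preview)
Your proposal is correct and follows exactly the paper's own approach: the paper simply notes that $N^\a(D')$ is the principal submatrix of $N^\a(D)$ indexed by the vertices of the induced subdigraph and then appeals to the Cauchy interlacing theorem \cite[Theorems 4.3.8 and 4.3.15]{HoJo13}. You have merely spelled out in detail the identification $e_{D'}(u,v)=e_D(u,v)$ for $u,v\in W$ that makes this work, which the paper leaves implicit.
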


To see a simple example how useful the interlacing theorem is, let us consider the following notion.
Let $\eta^+(D)$ denote the number of non-negative $N^\a$-eigenvalues of a digraph $D$ and $\eta^{-}(D)$ denote the number of non-positive $N^\a$-eigenvalues. The spectral bound of Cvetkovi\'{c} (see \cite{CDS95}) for the largest independent set of a graph extends to digraphs.
Here we will say that a vertex-set $S\subseteq V(D)$ is \emph{independent} if no two vertices in $S$ are joined by an arc in $D$.

\begin{proposition}
\label{prop:indep}
If $D$ has an independent set of size $k$, then for every $\alpha$, we have that $\eta^+(D) \geq k$ and $\eta^-(D) \geq k$.
\end{proposition}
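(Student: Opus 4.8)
The plan is to exploit the fact that an independent set induces a particularly trivial subdigraph, namely one with no arcs, and then feed this into the interlacing Corollary \ref{cor:interlacing digraph}.

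First I would fix an independent set $S\subseteq V(D)$ with $|S|=k$ and look at the induced subdigraph $D[S]$, whose Hermitian adjacency matrix is the principal submatrix $N^\a[S]$. Since no two vertices of $S$ are joined by an arc, every off-diagonal entry of $N^\a[S]$ vanishes; for a loopless $D$ (or, more generally, when $S$ contains no looped vertex) the diagonal entries vanish too, so $N^\a[S]$ is the $k\times k$ zero matrix and all of its eigenvalues equal $0$. Write these as $\ka_1=\dots=\ka_k=0$. I would emphasize at this point that this observation does not depend on $\a$ in any way, which is exactly why the statement is claimed for every $\a$.

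Next I would invoke Corollary \ref{cor:interlacing digraph} with $n=|V(D)|$ and $t=n-k$. Writing $\nu_1\ge\cdots\ge\nu_n$ for the $N^\a$-eigenvalues of $D$, interlacing yields, for each $s=1,\dots,k$,
$$
   \nu_s \ \ge\ \ka_s \ \ge\ \nu_{s+n-k}.
$$
The left-hand inequalities give $\nu_1,\dots,\nu_k\ge \ka_1,\dots,\ka_k=0$, so $D$ has at least $k$ non-negative $N^\a$-eigenvalues, i.e.\ $\eta^+(D)\ge k$. The right-hand inequalities give $\nu_{n-k+1},\dots,\nu_n\le 0$, so $D$ has at least $k$ non-positive $N^\a$-eigenvalues, i.e.\ $\eta^-(D)\ge k$.

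There is no genuinely hard step here; the only things to check with care are that the submatrix $N^\a[S]$ really is the zero matrix (a direct unwinding of the definitions of $N^\a$ and of independence) and that the index ranges in the interlacing inequalities are lined up so that precisely $k$ eigenvalues get pinned on each side. If one insists on permitting loops, one can either restrict attention to independent sets that avoid looped vertices, or observe that a loop merely shifts a diagonal entry by the real scalar $2a$ and keep track of that shift in the bookkeeping.
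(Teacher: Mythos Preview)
Your argument is correct and follows essentially the same route as the paper: the paper also invokes interlacing against the zero submatrix on the independent set to get $\nu_k\ge 0$, and then obtains the non-positive half by applying the same reasoning to $-N^\a(D)$ rather than reading off the other side of the interlacing inequalities as you do. The two variants are equivalent, and your explicit identification of $N^\a[S]$ as the zero matrix is exactly the point the paper leaves implicit.
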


\begin{proof} Let $\nu_1 \geq \cdots \geq \nu_n$ be the eigenvalues of $N^\a(D)$. By interlacing, we see that $\nu_k \geq 0$ and so $N^\a(D)$ has at least $k$ non-negative eigenvalues. Applying the same argument to $-N^\a(D)$ shows that there are at least $k$ non-positive eigenvalues as well.
\end{proof}

Since the independent sets in $D$ correspond to independent sets in the underlying undirected graph $G$, one can optimize the bound of Proposition \ref{prop:indep} over all orientations of $G$ and over different choices of $\alpha$. One can even add any real weights on the edges (which gives rise to a ``complex version" of the Lov\'asz $\vartheta$-function).

\subsection{Symmetry of the spectrum}

There is an essential difference when considering the symmetry of the spectra of the two kinds of Hermitian adjacency matrices. For the Hermitian matrix of the first kind from \cite{GuMo}, the spectrum of every digraph without loops or digons is symmetric about 0. This is no longer true for the Hermitian matrix of the second kind (see the example in Figure \ref{fig:digraph large negative}(a)). However, we still have the behaviour from the undirected case. Recall that a digraph $D$ is \emph{bipartite} if there is a \emph{bipartition} $V(D) = X \cup Y$ such that every arc in $D$ has one end in $X$ and the other end in $Y$.

\begin{theorem}
\label{thm:bipartite}
If D is bipartite, then the $M$-spectrum of $D$ is symmetric about 0, i.e., if $\nu$ is an eigenvalue of $M(D)$, then $-\nu$ is an eigenvalue of $M(D)$ with the same multiplicity.
\end{theorem}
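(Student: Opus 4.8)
The plan is to adapt the classical proof that a bipartite graph has spectrum symmetric about $0$: I would produce an explicit similarity transformation carrying $M(D)$ to $-M(D)$. Fix a bipartition $V(D) = X \cup Y$ witnessing that $D$ is bipartite, and let $P \in \RR^{V\times V}$ be the diagonal matrix with $P_{vv} = 1$ for $v\in X$ and $P_{vv} = -1$ for $v\in Y$. This $P$ is real, symmetric, and satisfies $P^2 = I$, so $P = P^{-1} = P^{*}$ and in particular $P$ is unitary.

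Next I would conjugate $M(D)$ by $P$ and compute entrywise. Since $P$ is diagonal, $(P\,M(D)\,P)_{uv} = P_{uu}\, M(D)_{uv}\, P_{vv}$. If $u$ and $v$ lie in the same part of the bipartition, bipartiteness gives $e(u,v) = e(v,u) = 0$, hence $M(D)_{uv} = e(u,v)\,\omega + e(v,u)\,\baromega = 0$; taking $u=v$ here also shows all diagonal entries vanish, reflecting the fact that a bipartite digraph is loopless. If $u$ and $v$ lie in opposite parts, then $P_{uu}P_{vv} = -1$. In either case $(P\,M(D)\,P)_{uv} = -\,M(D)_{uv}$, so $P\,M(D)\,P = -M(D)$.

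Finally, since $P^{-1} = P$, this identity reads $-M(D) = P\,M(D)\,P^{-1}$, so $-M(D)$ is similar — indeed unitarily similar, as $P^{*}=P$ — to $M(D)$. Similar matrices share the same characteristic polynomial, hence $M(D)$ and $-M(D)$ have identical spectra with multiplicities; equivalently, $\nu$ is an eigenvalue of $M(D)$ of multiplicity $m$ if and only if $-\nu$ is an eigenvalue of $M(D)$ of multiplicity $m$, which is exactly the asserted symmetry. (The same argument applies verbatim to $N^\a$ in place of $M$.)

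I do not expect a serious obstacle: the proof is short, and the complex entries $\omega,\baromega$ never interact with the real $\pm1$ scalars in any delicate way. The only points that genuinely invoke the hypothesis, and that one should state explicitly, are that bipartiteness forces $M(D)_{uv}=0$ whenever $u,v$ lie in the same part (so that only the off-diagonal blocks contribute and $P$ negates exactly those), and that it rules out loops (so that the diagonal, on which conjugation by $P$ acts trivially, is zero). Everything else is the standard sign-flip trick.
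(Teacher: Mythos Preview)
Your proof is correct and is essentially the same argument the paper gives: the paper phrases it as ``if $Mz=\nu z$ then $Mz'=-\nu z'$ where $z'$ agrees with $z$ on $X$ and with $-z$ on $Y$,'' which is exactly the statement $MPz=-\nu Pz$, i.e.\ $PMP=-M$, for your diagonal $P$. Your similarity formulation has the minor advantage of making the equality of multiplicities immediate via the characteristic polynomial.
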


\begin{proof}
The theorem is a simple consequence of the following fact: If $Mz=\nu z$, let $z'$ by the vector which agrees with $z$ on $X$ and agrees with $-z$ on $Y$, where $X\cup Y$ is the bipartition of $D$. Then $Mz' = -\nu z'$. The details are left to the reader.
\end{proof}

An alternative proof of Theorem \ref{thm:bipartite} can be given by looking at the coefficients of the characteristic polynomial $\phi(M(D),\nu) = \sum_{k=0}^n c_k \nu^{n-k}$. These coefficients have combinatorial interpretation via expansion of the determinant as the sum over all permutations of $V(D)$. If $D$ is bipartite, there are no cycles of odd length, thus all coefficients $c_k$ with $k$ odd are zero, and hence the eigenvalues are symmetric with respect to 0.

\begin{figure}
\centering
\includegraphics[scale=1.0]{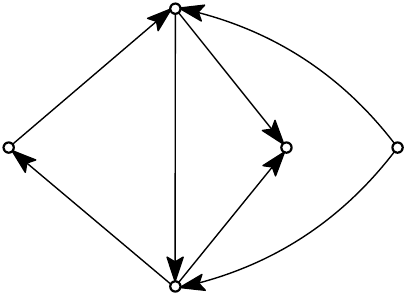}
\caption{A nonbipartite graph whose characteristic polynomial is $\phi(N,\nu)=\nu^5-7\nu^3+6\nu$ and its spectrum is symmetric about 0.}\label{fig:symmetricspectrum}
\end{figure}

There are nonbipartite digraphs whose $M$-spectrum is symmetric about 0. An example is given in Figure \ref{fig:symmetricspectrum}.

\section{Spectral radius}
\label{sec:rho}

Recall that the \emph{spectral radius} $\rho(Q)$ of a matrix $Q$ is defined as
\[ \rho(Q) = \max\{|\lambda| \mid \lambda \text{ an eigenvalue of }Q \}
\]
and we also define $\rho(D) = \rho(M(D))$ as the \emph{spectral radius} of the digraph $D$.

Let us start with a simple observation.

\begin{proposition}
\label{prop:average degree}
Suppose that a mixed graph $G$ with $n$ vertices has $s$ arcs and $t$ undirected edges. Let $d = \tfrac{s+2t}{n}$. Then
$\mu_1(G) \ge d$.
\end{proposition}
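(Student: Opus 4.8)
The plan is to use the variational characterization (\ref{eq:minmax jth largest}) with $j=1$, i.e. the fact that $\mu_1(G) = \max_{\Vert z\Vert=1} z^*M(G)z$, together with the single test vector $z = \vone/\sqrt{n}$, where $\vone$ is the all-ones vector. This immediately gives
$$
  \mu_1(G) \ge \frac{\vone^* M(G)\, \vone}{\vone^*\vone} = \frac{1}{n}\,\vone^* M(G)\,\vone ,
$$
so the whole task reduces to showing that $\vone^* M(G)\,\vone = s + 2t = nd$.

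To evaluate $\vone^* M(G)\,\vone$ I would first pass to the equivalent digraph obtained by replacing each of the $t$ undirected edges by a digon; by the definition of $M$ this leaves the matrix unchanged and produces a digraph with exactly $s + 2t$ arcs. Then I would apply the quadratic-form identity (\ref{eq:zNz}) with $z = \vone$, that is $x = \vone$ and $y = \vzero$, and with $\a = \omega = a+bi$, so that $a = \mathrm{Re}(\omega) = \tfrac12$. All terms containing $y$ vanish, and each arc $v\to u$ contributes $2a\,x_vx_u = 2a = 1$. Summing over all $s + 2t$ arcs yields $\vone^* M(G)\,\vone = s + 2t$, and dividing by $n$ finishes the argument. (Equivalently, and without invoking (\ref{eq:zNz}), one can write $\vone^* M(G)\,\vone = \sum_{u,v} M_{uv}$ and note that for each unordered pair $\{u,v\}$ one has $M_{uv}+M_{vu} = 2\,\mathrm{Re}(M_{uv})$, which equals $\omega+\baromega = 1$ for each single arc between $u$ and $v$ and equals $2$ for an undirected edge, since there $M_{uv} = \omega+\baromega = 1$; this again totals $s+2t$.)

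I do not expect a genuine obstacle here; the only point that needs care is the bookkeeping of how arcs versus undirected edges contribute to the quadratic form, and this is precisely where the additivity identity $\omega + \baromega = 1$ (equivalently $\mathrm{Re}(\omega) = \tfrac12$) is used — a single arc contributes half as much as a digon, i.e. as an undirected edge. It is worth recording that equality holds when $M(G)$ has constant row sums, e.g. when $G$ is an underlying-regular digraph in which the contribution at every vertex is the same, but this is not needed for the statement.
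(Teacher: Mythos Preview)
Your argument is correct and is essentially identical to the paper's own proof: the paper also plugs the constant unit vector $x$ with $x_v = n^{-1/2}$ (and $y=\vzero$) into (\ref{eq:zNz}), so that $x^*Mx = d$ because each arc contributes $2a\,x_vx_u = 1/n$ after replacing undirected edges by digons. Your additional remarks about the alternative computation via $\sum_{u,v}M_{uv}$ and about the equality case are valid but go beyond what the paper records.
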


\begin{proof}
To prove the inequality, we just take the constant real vector $x$ with coordinates $x_v=n^{-1/2}$. Clearly, $\Vert x\Vert = 1$. By (\ref{eq:zNz}), it is easy to see that $x^*Mx = d$ (since $y=0$). This implies that $\mu_1(G)\ge d$.
\end{proof}

\begin{figure}
\centering
\includegraphics[scale=1.2]{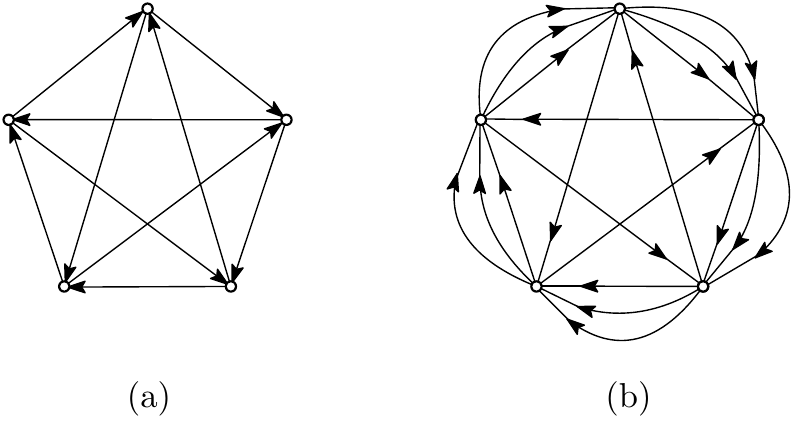}
\caption{(a) A simple circulant $Z_5$ with $\mu_1(Z_5) \approx 2.165$ and $\mu_5(Z_5) \approx -3.165$. (b) A circulant $C$ with multiple arcs has $\mu_1(C) \approx 4.0418$ and $\mu_5(C) \approx -6.8195$.}\label{fig:digraph large negative}
\end{figure}

For a digraph $D$, let the eigenvalues of $N^\a = N^\a(D)$ be $\nu_1 \geq \cdots \geq \nu_n$. Since $N^\a$ is not a matrix with non-negative real entries, there is no analogue of the Perron value of the adjacency matrix and the properties of $\nu_1$ may be a bit unintuitive. In particular, it may happen that $\nu_1 < |\nu_n| = \rho(N^\a)$. For example, the digraph shown in Figure \ref{fig:digraph large negative}(b) has $|\mu_5|/\mu_1 > 1.68$. The purpose of this section is to show that $|\nu_n|$ is still bounded in terms of $\nu_1$.

We will first give a general upper bound that holds for arbitrary Hermitian matrices $N^\a(D)$ as long as the real part of $\a$ is non-negative.

\begin{theorem}
\label{thm:spectral_radius}
Let $D$ be a digraph and $\a = a+bi$, where $a,b\in \RR$ and $a\ge0$. Then
$$
   \tfrac{1}{3}\, \rho(N^\a(D)) \le \nu_1(N^\a(D)) \le \rho(N^\a(D)).
$$
\end{theorem}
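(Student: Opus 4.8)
\medskip
The right-hand inequality $\nu_1(N^\a(D))\le\rho(N^\a(D))$ is just the definition of the spectral radius, so the whole task is the left-hand one, which I would rewrite as $\rho(N^\a(D))\le 3\,\nu_1(N^\a(D))$. First I would dispose of the trivial case: if $\rho(N^\a(D))=\nu_1(N^\a(D))$ there is nothing to prove, so assume $\rho:=\rho(N^\a(D))=-\nu_n(N^\a(D))$, i.e.\ the spectral radius is attained by the smallest eigenvalue $\nu_n=-\rho$. The only auxiliary object I need is $B:=N^1(D)$, the ordinary adjacency matrix of the underlying multigraph of $D$; it is real, symmetric and entrywise non-negative, so $\lmax(B)=\rho(B)$ and hence $\lmin(B)\ge-\lmax(B)$. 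Everything will rest on the identity
\[
   N^\a(D)+N^{\bar\a}(D)=2a\,B,
\]
which is immediate from $\a+\bar\a=2a$ and holds entry by entry (loops and multiplicities included). As a first lower bound on $\nu_1:=\nu_1(N^\a(D))$, I would feed the non-negative unit Perron eigenvector $u$ of $B$ into the Rayleigh quotient of $N^\a$: since $u$ is real, the quadratic-form expansion (\ref{eq:zNz}) loses all its $b$-terms and collapses to $u^*N^\a u=a\,u^*Bu=a\,\lmax(B)$, so that $\nu_1\ge a\,\lmax(B)$.

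For the second lower bound, the point is that conjugating a vector entrywise converts the $N^\a$-form into the $N^{\bar\a}$-form: for any $z\in\CC^V$ one has $(\overline z)^*N^\a\,\overline z=z^*\overline{N^\a}\,z=z^*N^{\bar\a}z$, both expressions being real. Adding the two forms and invoking the identity above, every unit vector $z$ satisfies
\[
   z^*N^\a z+(\overline z)^*N^\a\,\overline z=z^*\bigl(N^\a(D)+N^{\bar\a}(D)\bigr)z=2a\,z^*Bz\ge 2a\,\lmin(B)\ge-2a\,\lmax(B).
\]
Applying this with $z$ a unit eigenvector of $N^\a$ for $\nu_n=-\rho$, and using $\Vert\overline z\Vert=\Vert z\Vert=1$, I get $\nu_1\ge(\overline z)^*N^\a\,\overline z\ge\rho-2a\,\lmax(B)$.

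It remains to combine the two bounds. Writing $t:=a\,\lmax(B)\ge0$, they say $\nu_1\ge\max\{\,t,\ \rho-2t\,\}$, and the minimum over $t\ge0$ of the right-hand side is attained when $t=\rho-2t$, i.e.\ $t=\rho/3$, where it equals $\rho/3$; hence $\nu_1\ge\tfrac13\rho=\tfrac13\rho(N^\a(D))$, as wanted. The step I expect to need the most care is the conjugate-vector manipulation and the bookkeeping around it --- in particular verifying $\overline{N^\a}=N^{\bar\a}$, and being sure that replacing an eigenvector by its entrywise conjugate is legitimate in the min-max characterisation (\ref{eq:minmax jth largest}). A convenient side-effect of the ``maximum of two linear bounds'' device is that no case distinction on the magnitude of $a$ is needed; in particular the extreme choice $\a=i$ --- where the first bound is vacuous, the spectrum is symmetric about $0$, and the second bound is already tight --- is subsumed without comment.
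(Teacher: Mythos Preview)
Your argument is correct and is a genuinely different (and cleaner) route from the paper's. The paper takes a unit eigenvector $z=x+iy$ for $\nu_n=-\rho$, expands $z^*N^\a z = X+Y+Z$ with $X=2a\sum x_vx_u$, $Y=2a\sum y_vy_u$, $Z=2b\sum(y_vx_u-x_vy_u)$, and then splits into two cases according to whether $|X|+|Y|\ge\rho/3$ or not: in the first case it tests the coordinatewise absolute-value vectors $\hat x,\hat y$ in the Rayleigh quotient, and in the second case it tests $\bar z$. Your proof replaces the coordinatewise construction by the structural identity $N^\a+N^{\bar\a}=2aB$ with $B=N^1(D)$: the Perron (or simply a real top) eigenvector of $B$ gives $\nu_1\ge a\,\lmax(B)$, and the same conjugate-vector trick, now read through the identity, gives $\nu_1\ge\rho-2a\,\lmax(B)$; the min over $t=a\,\lmax(B)$ of $\max\{t,\rho-2t\}$ yields $\rho/3$ with no case split. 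Both proofs share the pivot $\bar z^*N^\a\bar z = z^*N^{\bar\a}z$; what you gain is a case-free argument that is insensitive to the value of $|\a|$ and packages the ``real part'' contribution via $\lmax(B)$ rather than via $|X|+|Y|$ (indeed $|X|+|Y|\le a\,\lmax(B)$, so your first bound dominates the paper's, while your second bound is correspondingly weaker --- the trade-off washes out at $\rho/3$). The only caveats to spell out when you write it up are the two you already flagged: that $\overline{N^\a}=N^{\bar\a}$ entrywise, and that $z^*Bz$ is real with $z^*Bz\ge\lmin(B)\ge-\lmax(B)$ by Perron--Frobenius for the nonnegative symmetric $B$.
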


\begin{proof}
While the second inequality follows from the definition of the spectral radius, the first one needs a proof. We may assume that $|\a|=1$. If $\rho(N^\a)\ne \nu_1$, then $\rho = \rho(N^\a) = |\nu_n|$. Let $z=x+iy$ be a unit eigenvector for $\nu_n$. We may assume w.l.o.g. that $x\ne 0$ and $y\ne 0$. By (\ref{eq:zNz}), we have
\begin{equation}\label{eq:XYZ}
   -\rho = \nu_n = z^*N^\a z = X + Y + Z,
\end{equation}
where
$$
   X = 2a \sumvtou{x_vx_u}, \quad Y = 2a \sumvtou{y_vy_u}, \quad \textrm{and} \quad Z = 2b \sumvtou{(y_vx_u - x_vy_u)}.
$$

\newcommand\hatx{\hat x}
\newcommand\haty{\hat y}

Suppose first that $|X|+|Y|\ge \rho/3$. Let ${\hatx},{\haty}\in\RR^V$ be real vectors whose coordinates are $|x_v|$ and $|y_v|$, respectively. Then ${\hatx}^* N^\a {\hatx} \ge |X|$ and ${\haty}^* N^\a {\haty} \ge |Y|$. Since $\Vert {\hatx}\Vert^2 + \Vert {\haty}\Vert^2 = \Vert z\Vert^2 = 1$, it follows that
${\hatx}^* N^\a {\hatx} / \Vert {\hatx}\Vert^2 \ge |X|+|Y|$ or ${\haty}^* N^\a {\haty} / \Vert {\haty}\Vert^2 \ge |X|+|Y|$. In either case, it follows that $\nu_1\ge |X|+|Y|\ge \rho/3.$

Suppose now that $|X|+|Y|\le \rho/3$. Since $\rho = -X-Y-Z \le |X|+|Y| - Z$, we conclude that $\tfrac{2}{3}\,\rho \le -Z$.
Let us consider $\overline z = x -iy$. It follows that
$$
   \nu_1 \ge {\overline z}^* N^\a {\overline z} = X + Y - Z.
$$
By adding (\ref{eq:XYZ}) to this inequality, we obtain that
$$
   \rho = -X-Y-Z \le -X-Y + \nu_1 -X - Y \le 2(|X|+|Y|) + \nu_1 \le \tfrac{2}{3}\,\rho + \nu_1,
$$
which implies that $\rho \le 3\nu_1$.
\end{proof}

Theorem \ref{thm:spectral_radius} is very similar to the corresponding theorem in \cite[Theorem 5.6]{GuMo} and gives the same constant $\tfrac{1}{3}$ in the lower bound. However, the proof here is essentially different and gives a new proof of \cite[Theorem 5.6]{GuMo}. On the other hand, the proof of \cite[Theorem 5.6]{GuMo} does not work for our Theorem \ref{thm:spectral_radius}.

While the factor $\tfrac{1}{3}$ in Theorem \ref{thm:spectral_radius} is tight for Hermitian matrices of the first kind \cite{GuMo}, it is not tight for the second kind, where it can be strengthened as follows.

\begin{theorem}
\label{thm:spectral_radius_new}
If $D$ is a digraph and $M=M(D)$ is its Hermitian matrix of the second kind, then
$$
   \tfrac{1}{2} \rho(M) \le \mu_1(D) \le \rho(M).
$$
\end{theorem}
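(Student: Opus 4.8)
The plan is to prove both inequalities by comparing $M=M(D)$ with the adjacency matrix $A=N^1(D)$ of the multigraph underlying $D$ (every arc becomes an edge, a digon becomes a double edge), so that $A$ is symmetric and nonnegative with $A_{uv}=e(u,v)+e(v,u)$. The upper bound $\mu_1(D)\le\rho(M)$ is immediate from the definition of $\rho$. For the remaining work I would first record the elementary estimate — valid because $|\omega|=|\baromega|=1$ — that $|M_{uv}|=|e(u,v)\omega+e(v,u)\baromega|\le e(u,v)+e(v,u)=A_{uv}$ for all $u,v$. Consequently, for any eigenvalue $\nu$ of $M$ with unit eigenvector $z$, writing $|z|$ for the vector of moduli $(|z_v|)_v$ (also a unit vector), $|\nu|=|z^*Mz|\le\sum_{u,v}|z_u|\,|M_{uv}|\,|z_v|\le |z|^*A\,|z|\le\lmax(A)$ by the Rayleigh bound for the symmetric matrix $A$; taking the maximum over eigenvalues gives $\rho(M)\le\lmax(A)$. (Equivalently, $|M|\le A$ entrywise forces $\rho(M)\le\rho(A)=\lmax(A)$.)

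It therefore suffices to prove $\mu_1(D)\ge\tfrac12\lmax(A)$, and this is exactly where the additivity identity $\omega+\baromega=1$ is used. Let $w\in\RR^V$ be a real unit eigenvector of $A$ belonging to $\lmax(A)$ (one exists since $A$ is real symmetric). Evaluating the Hermitian form at $z=w$ and using that $w$ is real,
\[
  w^*Mw=\sum_{u,v}w_uw_v\bigl(e(u,v)\omega+e(v,u)\baromega\bigr)=(\omega+\baromega)\sum_{u,v}e(u,v)\,w_uw_v=\tfrac12\,w^*Aw,
\]
since the two arc-sums coincide after interchanging $u$ and $v$, and $w^*Aw=2\sum_{u,v}e(u,v)w_uw_v$. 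As $w$ is a unit eigenvector, $w^*Aw=\lmax(A)$, so $w^*Mw=\tfrac12\lmax(A)$, a real number; and by the Rayleigh–Ritz characterization in Proposition~\ref{prop:real}(iii) we get $\mu_1(D)\ge w^*Mw=\tfrac12\lmax(A)\ge\tfrac12\rho(M)$, completing the proof.

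I do not foresee a genuine obstacle: the only ingredients are that $A=N^1(D)$ dominates $M$ entrywise in modulus — which bounds $\rho(M)$ from above using merely $|\omega|=1$ — and that the identity $\omega+\baromega=1$ converts a top eigenvector of $A$ into a test vector witnessing $\mu_1(M)\ge\tfrac12\lmax(A)$. The crux is simply recognizing $A$ as the right comparison object; the mechanism is quite different from the eigenvector-splitting argument behind Theorem~\ref{thm:spectral_radius}, and it transparently explains the improved constant. The bound is sharp: the directed triangle has $M$-spectrum $\{1,1,-2\}$, so there $\mu_1=1=\tfrac12\rho(M)$.
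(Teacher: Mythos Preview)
Your proof is correct. The paper's argument is closely related but does not introduce the comparison matrix $A=N^1(D)$ explicitly: it takes a unit eigenvector $z$ for $\mu_n$, passes to $\hat z=(|z_v|)_v$, and shows directly that $\hat z^{*}M\hat z\ge\tfrac12|z^{*}Mz|=\tfrac12\rho$ via the triangle inequality $|\omega\bar z_v z_u+\bar\omega\bar z_u z_v|\le 2|z_v||z_u|$, together with the identity $\hat z^{*}M\hat z=\sum_{v\to u}\hat z_v\hat z_u$ coming from (\ref{eq:zNz}) with $a=\tfrac12$. Unpacked, both arguments rest on the same two facts---entrywise $|M_{uv}|\le A_{uv}$, and $w^{*}Mw=\tfrac12 w^{*}Aw$ for every real vector $w$ since $\omega+\baromega=1$---but they deploy different test vectors: the paper uses $|z|$ for $z$ the bottom eigenvector of $M$, whereas you use the top eigenvector of $A$. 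Your organization has the advantage of naming the comparison object and yielding the slightly sharper intermediate chain $\tfrac12\rho(M)\le\tfrac12\lmax(A)\le\mu_1(D)$; the paper's version is marginally more self-contained since it never leaves $M$ or invokes the spectral theory of a second matrix.
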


\begin{proof}
We may assume that $\rho = \rho(M) = -\mu_n > \mu_1$.
Let $z\in \CC^V$ be a unit eigenvector for $\mu_n$ and let $\widehat{z}\in \RR^V$ be the vector with entries $\widehat{z}_v = |z_v|$, $v\in V$. Let us observe that $\Vert \widehat{z} \Vert = 1$ and that the following holds:
\begin{equation}\label{eq:half}
   \widehat{z}_v \widehat{z}_u = |\omega|\, |z_v|\, |z_u| = |\omega \bar{z}_v z_u| =
   \tfrac{1}{2}\,|\omega \bar{z}_v z_u| + \tfrac{1}{2}\,|\bar\omega \bar{z}_u z_v| \ge
   \tfrac{1}{2}\,|\omega \bar{z}_v z_u + \bar\omega \bar{z}_u z_v|.
\end{equation}
By using this inequality and (\ref{eq:zNz}), we obtain the following:
$$
  \tfrac{1}{2}\,\rho = \tfrac{1}{2}\,\left| z^* M z \right| = \tfrac{1}{2}\,\left| \sumvtou (\omega\bar{z}_v z_u + \bar\omega \bar{z}_u z_v)\right| \le
  \sumvtou \widehat{z}_v \widehat{z}_u = \widehat{z}^* M \widehat{z} \le \mu_1.
$$
This completes the proof.
\end{proof}

The factor $\tfrac{1}{2}$ in Theorem \ref{thm:spectral_radius_new} is best possible. This is justified with the example of the directed cycle of length 3, whose $M$-eigenvalues are $1$ (with multiplicity 2) and $-2$. By using the Cartesian product operation (see \cite{GuMo}), one can boost this example to obtain bigger graphs and larger spectral radii. The example of the 3-cycle shows another ``anomaly" that the largest eigenvalue of a (strongly) connected digraph need not be simple.

The above proof gives the following improvement for any $\alpha=a+bi$, where $|\a|=1$ with $a > \tfrac{1}{3}$:
$$
   a \, \rho(N^\a(D)) \le \nu_1(N^\a(D)) \le \rho(N^\a(D)).
$$

\subsection*{Acknowledgement}
The author is grateful to an anonymous referee whose insight provided a strengthening of Theorem \ref{thm:spectral_radius} that is now given in Theorem \ref{thm:spectral_radius_new}.

\bibliographystyle{abbrv}
\bibliography{NewHermitian}

\begin{thebibliography}{10}

\bibitem{Bi93}
N.~Biggs.
\newblock {\em Algebraic graph theory}.
\newblock Cambridge Mathematical Library. Cambridge University Press,
  Cambridge, second edition, 1993.

\bibitem{BH}
A.~E. Brouwer and W.~H. Haemers.
\newblock {\em Spectra of graphs}.
\newblock Universitext. Springer, New York, 2012.

\bibitem{CaCietal12}
M.~Cavers, S.~M. Cioab{\u{a}}, S.~Fallat, D.~A. Gregory, W.~H. Haemers, S.~J.
  Kirkland, J.~J. McDonald, and M.~Tsatsomeros.
\newblock Skew-adjacency matrices of graphs.
\newblock {\em Linear Algebra Appl.}, 436(12):4512--4529, 2012.

\bibitem{CDN15}
C.~Chun, D.~Mayhew, and M.~Newman.
\newblock Obstacles to decomposition theorems for sixth-root-of-unity matroids.
\newblock {\em Ann. Comb.}, 19(1):79--93, 2015.

\bibitem{CDS95}
D.~M. Cvetkovi{\'c}, M.~Doob, and H.~Sachs.
\newblock {\em Spectra of graphs}.
\newblock Johann Ambrosius Barth, Heidelberg, third edition, 1995.
\newblock Theory and applications.

\bibitem{GR}
C.~Godsil and G.~Royle.
\newblock {\em Algebraic graph theory}, volume 207 of {\em Graduate Texts in
  Mathematics}.
\newblock Springer-Verlag, New York, 2001.

\bibitem{GuMo}
K.~Guo and B.~Mohar.
\newblock Hermitian adjacency matrix of digraphs and mixed graphs.
\newblock {\em J. Graph Theory}, 85:217--248, 2017.

\bibitem{HoJo13}
R.~A. Horn and C.~R. Johnson.
\newblock {\em Matrix analysis}.
\newblock Cambridge University Press, Cambridge, second edition, 2013.

\bibitem{KaKn10}
M.~Kalmykova and B.~Kniehla.
\newblock “{S}ixth root of unity” and {F}eynman diagrams: hypergeometric
  function approach point of view.
\newblock {\em Nuclear Physics B (Proc. Suppl.)}, 205--206:129--134, 2010.

\bibitem{LiuLi15}
J.~Liu and X.~Li.
\newblock Hermitian-adjacency matrices and hermitian energies of mixed graphs.
\newblock {\em Linear Algebra and its Applications}, 466:182--207, 2015.

\bibitem{M10}
B.~Mohar.
\newblock Eigenvalues and colorings of digraphs.
\newblock {\em Linear Algebra Appl.}, 432(9):2273--2277, 2010.

\bibitem{OVW98}
J.~Oxley, D.~Vertigan, and G.~Whittle.
\newblock On maximum-sized near-regular and {$\root 6\of 1$}-matroids.
\newblock {\em Graphs Combin.}, 14(2):163--179, 1998.

\bibitem{Re09}
A.~Recski.
\newblock Matroids---the engineers' revenge.
\newblock In {\em Research trends in combinatorial optimization}, pages
  387--398. Springer, Berlin, 2009.

\bibitem{Sp12}
D.~Spielman.
\newblock Spectral graph theory.
\newblock In {\em Combinatorial scientific computing}, Chapman \& Hall/CRC
  Comput. Sci. Ser., pages 495--524. CRC Press, Boca Raton, FL, 2012.

\bibitem{Wh97}
G.~Whittle.
\newblock On matroids representable over {${\rm GF}(3)$} and other fields.
\newblock {\em Trans. Amer. Math. Soc.}, 349(2):579--603, 1997.

\bibitem{Wilf}
H.~S. Wilf.
\newblock The eigenvalues of a graph and its chromatic number.
\newblock {\em J. London Math. Soc.}, 42:330--332, 1967.

\end{thebibliography}

\end{document}